\documentclass{article}
\usepackage[margin=1in]{geometry}
\usepackage{amsmath,amssymb,amsthm,enumerate,hyperref,xcolor,enumitem, proof, quiver, setspace, comment}
\usepackage[backend=biber,style=authoryear,maxcitenames=2,uniquename=false,uniquelist=false,useprefix=true]{biblatex}
\DeclareBibliographyCategory{cited}
\AtEveryCitekey{\addtocategory{cited}{\thefield{entrykey}}}
\usepackage[parfill]{parskip}
\usepackage{graphicx}
\newtheorem*{theorem}{Theorem}

\newtheorem*{definition}{Definition}

\title{Studying proof data with proof theory: the case of AI-generated mathematics}
\author{Safal R. Aryal}
\begin{document}
\doublespacing
\maketitle
\begin{abstract}
    I present a method for analyzing large collections of formally verified mathematical proofs (e.g. OpenAI's) grounded in 
    some historical and philosophical musings on mathematics in the age of AI. Many of the quantities involved, such as 
    the size of a proof term or the number of redundant steps, are already computed by work on improving 
    automated provers, where they serve as targets. My aim is to turn them into measurements of proofs themselves: 
    compared against a reference library and against each proof's own cut-free form, checked against theorems they 
    must obey, and controlled for how the proofs were compiled. With these instruments, I measure how (much) 
    machine-generated mathematics diverges from human mathematics, and find that AI-written formal proofs realize far less of the logical 
    compression available to them than human mathematicians do (31\% against 73\% in mathlib, and 55\% against 86\% 
    on proofs that call no automation). These figures are broadly in line with reports from those who build provers and 
    proof-improvement tools that AI-written proofs are often longer than they need to be.
\end{abstract}

\section{Introduction}
\begin{quote}
    \textit{
     Rather, as we do not ask what hope of gain makes a little bird warble, since we know that it takes delight in 
     singing because it is for that very singing that the bird was made, so there is no need to ask why the human 
     mind undertakes such toil in seeking out these secrets of the heavens. For the reason why the mind was joined 
     to the senses by our Maker is not only so that Man should maintain himself, which many species of living things 
     can do far more cleverly with the aid of even an irrational mind, but also so that from those things which we 
     perceive with our eyes to exist we should strive towards the causes of their being and becoming, although we 
     should get nothing else useful from them. And just as other animals, and the human body, are sustained by 
     food and drink, so the very spirit of Man, which is something distinct from Man, is nourished, is increased, 
     and in a sense grows up on this diet of knowledge, and is more like the dead than the living if it is touched 
     by no desire for these things. Therefore as by the providence of Nature nourishment is never lacking for 
     living things, so we can say with justice that the reason why there is such great variety in things, and 
     treasures so well concealed in the fabric of the heavens, is so that fresh nourishment should never be 
     lacking for the human mind, and it should never disdain it as stale, nor be inactive, but should have in 
     this universe an inexhaustible workshop in which to busy itself.}
    \hfill --- \textsc{Johannes Kepler}
\end{quote}
\subsection{Up to the stars}

Contemporary AI's seeming resolution of the Navier--Stokes problem \textit{\parencite{openai2026navier}} confirms that machine learning can now generate complex 
mathematical proofs at an unprecedented scale, forcing a critical reflection on the nature of proof itself.
Such reflections (especially those produced in the academy) customarily began with discussions of the foundational crisis that mathematics underwent
in the late 19th and early 20th centuries. But while this was a pivotal chapter in the history of mathematics (and arguably the most 
major moment for proof since Euclid compiled the \textit{Elements} \textit{\parencite{euclid1908}}), I do not believe that this crisis accurately captures 
the epistemological and methodological challenges presented by AI-generated proofs. To fully understand this moment in history, I 
believe that one has to look back further: to Tycho Brahe and Johannes Kepler's collaborative and sequential efforts to collect 
astronomical data during the Scientific Revolution.

For years, Brahe labored at his observatories on the island of Hven to meticulously record decades of naked-eye planetary and stellar 
positions using large, custom-built pre-telescopic instruments (e.g. he used, among many other instruments, a massive brass 
quarter-circle arc mounted to a solid north-south wall with a 2-meter radius, used to measure stellar altitudes as stars 
crossed the meridian) \textit{\parencite{brahe1598,thoren1990}}. When Kepler was appointed as Brahe's successor as the Imperial Mathematician to Holy Roman Emperor 
Rudolf II, he inherited this vast and unprecedentedly accurate observational archive, using the empirical measurements to 
mathematically test and formulate his laws of planetary motion \textit{\parencite{kepler1609,kepler1992}}.

Prior to his death, Tycho Brahe had been puzzled by the motion of Mars: he was not able to find a mathematically parsimonious 
description of how the planet moved that also minimized measurement error. To solve Brahe's puzzle, Kepler had to make five 
key evidentiary innovations. First, he was able to formulate a mathematical method that made calculations
possible that used the position of the true sun (the actual sun you see moving across the sky), rather than the 
mean sun (an imaginary, average sun that moves at a completely steady speed all year--a favorite abstraction of 
the astronomers of antiquity). This allowed for a more accurate view of the relative motions of the Sun, Earth, and Mars, 
eliminating many longitudinal errors that existed in Brahe's interpretations of the data. 

Kepler's second innovation concerned the bisection of eccentricity for the Earth--Sun orbit, which helped reduce the 
discrepancies in Mars' predicted position from over 4 degrees to less than an arcminute. This refinement was made 
possible in part by his third innovation, the area rule: a line joining the Sun and a planet sweeps out equal areas 
in equal periods of time. Together with the use of the true sun, the area rule allowed Kepler to calculate planetary 
positions with remarkable accuracy. Moreover, it constrained the possible orbital geometries: given the temporal and 
positional constraints imposed by Brahe's observations, Kepler found that the ellipse provided the most accurate 
mathematical description of Mars' motion \textit{\parencite{stephenson1987}}. 

None of this, of course, would have been possible without Brahe's dataset. Brahe's data made persistent discrepancies 
available for mathematical investigation at an unprecedented level of precision. Kepler was able to use these discrepancies 
to revise established assumptions about planetary motion, developing a system in which geometrical constructions were 
increasingly informed by physical considerations. This represented a fundamental change in the conception of astronomy, 
from the mathematical description of celestial positions toward the study of physical bodies and their motions. Better known than Brahe or Kepler, Galileo's telescopic observations further advanced this transition: lunar mountains, 
sunspots, the phases of Venus, and Jupiter's satellites challenged important components of Scholastic cosmology and 
expanded the empirical scope of astronomical inquiry \textit{\parencite{galileo1989,galileo2010}}. However, while Kepler had developed remarkably accurate geometrical 
models and Galileo had established new observational facts about the heavens, a unified physical theory of terrestrial and 
celestial motion remained an open problem.

The resolution of this problem, of course, came from Isaac Newton (beginning in \textit{De motu corporum in gyrum} \textit{\parencite{whiteside1974}} and culminating in the 
famous \textit{Philosophiæ Naturalis Principia Mathematica} \textit{\parencite{newton1687,newton1999}}). \textit{De motu} (which our discussion here will focus on) demonstrated how mathematical theory 
could strengthen the explanatory standing of an astronomical system even when competing models accounted for the 
available observations with comparable accuracy. By this point, the quality and resolution of observational evidence
were already considerable, largely due to the work of Brahe and his successors. The remaining problem was to establish a 
more compelling relationship between the observed motions of celestial bodies and the principles of earthly mechanics. 
Newton's innovation in \textit{De motu} was to show that the mathematical structures underlying these two domains could be systematically 
related, thereby providing a stronger basis for preferring the Keplerian system over its competitors.

The significance of Newton's work was that it established precise mathematical relationships between theories that had 
previously developed somewhat independently. By demonstrating how centripetal forces could account for Kepler's laws, 
and deriving further consequences for planetary motion, Newton substantially increased the explanatory power of the laws of  
Kepler's system. These deductions also provided a form of evidential support: a mathematical relationship between 
independently motivated theories could increase the credibility of both, while opening up avenues for further investigation. 
The resulting framework was therefore more robust than its predecessors, insofar as it could account for a wider range of 
phenomena through a common set of principles; a process which culminated in the development of the calculus and Newton's 
laws of motion.

Could the laws of motion have been discovered without these innovations in turning observations into data, and data into 
evidence? Perhaps in some possible world they could have been, but in the world we inhabit, I would wager they could not. 
Astronomy had long been treated primarily as a geometrical discipline, in which the principal task was to formulate 
mathematically coherent descriptions of celestial motion. Brahe's systematic and unusually precise observations made it 
possible to identify discrepancies at a resolution that earlier astronomical models could not adequately accommodate, 
while Kepler's willingness to revise the assumptions through which these observations were interpreted led to a 
substantially more accurate mathematical account of planetary motion. Newton's achievement depended on this prior work, 
since the integration of celestial and terrestrial mechanics required a sufficiently accurate mathematical description 
of orbital motion to establish a meaningful relationship between the two. The transition to Newtonian mechanics thus 
depended on a methodological transformation in which observations were systematically collected, discrepancies were 
treated as grounds for revising existing models, and mathematical relationships were used to establish connections that the 
observations alone could not have revealed. Without these developments, the data necessary for Newton's synthesis might 
have existed in some form, but the evidentiary and mathematical structure through which they became scientifically 
consequential would have been considerably more difficult to establish.

I now turn to argue that mathematics -- the purest, most pristine, deductive science -- is undergoing a similar transformation.
Notwithstanding significant social and economic challenges, we have massive amounts of mathematical data,
which we may turn into mathematical evidence to test mathematical claims.

This paper is a small attempt at doing so. I compiled 96 projects from OpenAI's library of AI-written mathematics,
three smaller AI-written corpora, and two human research projects against the same version of mathlib, Lean's main
library, and measured the proof terms that Lean checks. The structure of proofs has rarely been measured empirically,
let alone at this scale, so part of the exercise is constructing a methodology strong enough to put numbers to it. Many of the numbers
themselves are not new: the people who build and improve automated provers already compute them, as targets to
optimize (see ``What is already measured''). What is new here is the attempt to use them as measurements of proofs. The
measures are checked against theorems they must obey (where such theorems exist), against how much mathlib differs
from itself, and against controls for how the corpora were compiled, and I report where results moved along the way. Three results hold up under these controls.
First, the mix of steps in OpenAI's proofs is 5.8 times as far from mathlib's as random samples of mathlib are from
the rest of mathlib, and still 3.4 to 3.9 times as far once the output of automated tactics is collapsed (though this
comparison does not control for subject matter). Second, OpenAI's proofs state many intermediate claims that do no
work: 40\% of them are larger with their intermediate claims than without them, against 13\% of mathlib's. Third,
they realize less of the compression available to them (the amount by which a proof could be shortened by proving
each repeated piece once and reusing it): 31\% against 73\% in mathlib, and 55\% against 86\% on
proofs that call no automation. Other first impressions did not hold up. OpenAI's proofs are longer mostly because
they call more automation, and once automation is set aside they are no more compressible than mathlib's.
\subsection{The ``foundational crisis'' of the 19th and 20th centuries}
\begin{quote}
     \textit{Already at this time I should like to assert what the final outcome will be: mathematics is a presuppositionless 
     science. To found it I do not need God, as does Kronecker, or the assumption of a special faculty of our understanding 
     attuned to the principle of mathematical induction, as does Poincaré, or the primal intuition of Brouwer, or, finally, 
     as do Russell and Whitehead, axioms of infinity, reducibility, or completeness, which in fact are actual, contentual
     assumptions that cannot be compensated for by consistency proofs.}
     \par\medskip
    \hfill --- \textsc{David Hilbert}, \textit{Die Grundlagen der Mathematik} \textit{\parencite{hilbert1928,hilbert1967}}
\end{quote}
What is mathematics? We can separate this query into two parts: one \textit{metaphysical} and one \textit{epistemic}. 
The former asks what mathematical objects (and mathematics itself) are, while the latter concerns our mathematical 
\textit{knowledge}. While we now conceptualize these as two separate questions, this has not always been the case. 
The contemplation of such questions reached a new height in the so-called ``foundational crisis'' mathematics faced in the 
late 19th and early 20th centuries. We will begin our review of this crisis by comparing views held on this question by two 
bitter rivals: the proto-constructivist Leopold Kronecker, and (putative) father of set theory, Georg Cantor (though do note that it has 
come to light that Cantor may have plagiarized much of his work from Richard Dedekind--since this is meant to be an opinionated 
conceptual summary of the crisis rather than an academic work of history, I will bracket this fact out in the following analysis, but the reader should 
have it in mind).
Consider the famous saying commonly attributed to Kronecker:
\begin{quote}
    \textit{God made the integers, all else is the work of man.} \textit{\parencite[p.~19]{weber1893}}
\end{quote}
Kronecker's mathematical work reflected this position: in number theory and algebra, he sought to ground mathematical 
constructions in the integers and finite arithmetic operations, resisting mathematical objects whose existence depended on 
completed infinite processes. Cantor pursued a sharply 
different program, developing set theory and the transfinite numbers, which treated infinite collections as legitimate 
mathematical objects in their own right, pursuant to the following maxim:
\begin{quote}
    \textit{Mathematics is completely free in its development and is only bound to the self-evident consideration that 
    its concepts are both free of contradictions and that they are in fixed relationships to proven concepts that have 
    already been previously established.} ... \textit{Once a number satisfies all these conditions, it can and must be 
    considered as existing and real in mathematics.} ... \textit{On the other hand, every superfluous constriction of the 
    impulse for mathematical research seems to me to bring with it a much greater danger} ... \textit{for the essence 
    of mathematics lies precisely in its freedom.} \textit{\parencite[\S~8]{cantor1883grundlagen}}
\end{quote}
The underlying disagreement concerns what entitles us to admit a mathematical object into our ontology in the first place. 
For Cantor, the development of mathematics can itself justify an expansion of what exists mathematically: objects introduced 
through new theoretical constructions acquire legitimacy through the demonstrable relationships they bear to existing 
mathematics. This makes mathematical ontology responsive to mathematical practice, insofar as the development of new 
theories can motivate commitments to objects that were not previously recognized. Kronecker's position places greater weight 
on the arithmetic (and ultimately, intuitive -- construct a number system using your fingers!) foundations from which such 
constructions must proceed, thereby restricting the extent to which mathematical practice can determine its own ontology.

It would be glib of me, but not entirely inaccurate, to describe the subsequent episodes in the foundational crisis as footnotes to the 
attitudes set by Kronecker and Cantor. Consider, for example, the work of Gottlob Frege. On the one hand, Frege (at the time 
of \textit{Die Grundlagen der Arithmetik}) appeared to hold Cantor's basic conceptual breakthrough in high regard:
\begin{quote}
    \textit{I find special reason to welcome in Cantor's investigations an extension of the frontiers of science, because they have 
    led to the construction of a purely arithmetical route to higher transfinite numbers (powers).} \textit{\parencite[\S~86]{frege1884,frege1950}}
\end{quote}
However, Frege's own project was to establish a rigorous logical foundation for arithmetic, and this led him to question 
some of the prevailing accounts of what numbers are and how they are derived. In particular, he rejected the idea that 
number could be explained simply by abstracting from the differences between objects and treating them as identical units (as 
Cantor sought to do). His objection was that this procedure seemed to produce a general concept under which objects fall, 
rather than an account of number itself:
\begin{quote}
\textit{And with that we are back once more at our arbitrary way of regarding things, unless we are willing, regardless 
of truth, to ascribe to things an identity going beyond any they actually possess.} ...
\textit{In that event we are not left, as Lipschitz maintains, with ``the concept of the Number of the things considered''; 
what we get is rather a general concept under which the things in question fall.} \textit{\parencite[\S~34]{frege1884,frege1950}}
\end{quote}
Frege's disagreement with David Hilbert brought these questions into particularly sharp focus, culminating in a controversy 
over the foundations of geometry and the role of consistency proofs in mathematical justification \textit{\parencite{frege1980}}. In 
\textit{Grundlagen der Geometrie} \textit{\parencite{hilbert1899}}, Hilbert demonstrated how the consistency of geometrical axioms could be reduced to 
that of arithmetic by interpreting geometrical terms through arithmetic objects and relations. For Frege, axioms 
expressed thoughts about a determinate subject matter, and consistency alone could not establish the existence of the 
objects concerned. Hilbert's approach placed the emphasis on the formal relationships specified by axioms, with 
consistency providing grounds for accepting a mathematical theory. But in another way, his program carried forward Cantor's 
conception of mathematical freedom, developing a systematic method for establishing the legitimacy of mathematical 
theories through their axiomatic structure.

On Kronecker's side, the expression of similar finitist and constructivist inclinations after him perhaps came from 
Émile Borel:
\begin{quote}
    \textit{One can adopt a more strictly arithmetical point of view, as Kronecker did in algebra ... This new point of view can be characterized by the fact that, in any given problem, one introduces only a finite number of integers by means of which all the elements of the problem are explicitly defined.} ...

    \textit{In this construction, the idea of magnitude plays no part at all ... given the procedure itself, we can define only a countable infinity of such numbers. Some minds will see this as a deficiency; others, on the contrary, will think that, in practice, every number, in order to be effectively known, must be capable of being characterized by a finite number of words} ... 
\end{quote}
A related position emerged among several French mathematicians around the turn of the 20th century, including Émile 
Borel, René Baire, and Henri Lebesgue \textit{\parencite{hadamard1905}}, who were later grouped under the label of
\textit{pre-intuitionists} in the terminology of the later Luitzen Jan Egbertus Brouwer \textit{\parencite{brouwer1952}}. Borel's approach placed greater 
restrictions on mathematical existence than Cantor's: consistency alone was insufficient, since mathematical objects 
also had to be effectively definable, in principle through a finite number of words. Given such a restriction, only a 
countable infinity of numbers could be individually characterized, placing substantial limits on the legitimacy of 
arguments involving the uncountable infinite. These mathematicians did not constitute a unified philosophical school, 
and their views differed in important respects. Nevertheless, their work raised a question that Brouwer would pursue much 
further: whether the restrictions required for mathematical knowledge should extend beyond the objects we admit to the 
very logical principles through which we reason about them.

Brouwer's position took the preceding disagreements in a rather different direction by questioning the assumption that 
mathematical objects could be understood independently of the constructions through which they become known. For 
Brouwer, mathematics is fundamentally a creation of the mind, preceding both mathematical language and the logical 
principles used to describe it. Its foundation lies in the intuition of time: the experience of one moment giving way to 
another, while the former is retained in memory. Abstracting from the particular content of this experience produces what 
Brouwer called the intuition of \textit{two-ity} \textit{\parencite{brouwer1952}}, from which the natural numbers arise. On this view, logic describes 
regularities within mathematical activity, and cannot independently establish the existence of mathematical objects.

Brouwer's conception of the real numbers (a mathematical structure that has been the subject of much controversy)
 makes the consequences of this position particularly clear. In his later intuitionistic mathematics, real numbers
 could be represented by infinite sequences of rational approximations, including what he called \textit{choice sequences} \textit{\parencite{brouwer1918}}. 
Some of these sequences are governed by a fixed rule, while others may develop through successive free choices whose 
outcomes are not determined in advance. This provided Brouwer with a conception of the continuum that was not restricted 
to numbers individually characterizable through finite descriptions, as in the approaches associated with Borel. At 
the same time, the infinite sequences involved were understood through their ongoing construction, rather than as 
completed infinite totalities. A real number could therefore have mathematical significance even when its successive 
approximations were not all determined by a single law.

On Brouwer's view, since a mathematical statement is established through a 
construction that proves it, he rejected the unrestricted use of the law of excluded middle, according to which 
every proposition \(P\) satisfies \(P\lor\neg P\). For certain statements concerning real numbers, neither the statement 
nor its negation need be presently constructible. For example, one cannot generally assert that every real number is 
either equal to zero or unequal to zero without a method that establishes one of these alternatives. His approach 
thus connected the epistemology of mathematics directly to its metaphysical commitments: the nature of 
mathematical objects depended on the constructions through which they could be brought into existence.
As expressed in his doctoral thesis \textit{\parencite{brouwer1907,brouwer1975}}:

\begin{quote}
\textit{We start with the notion of the natural numbers, etc. They are so familiar to us, that it is difficult to reduce 
this notion to simpler ones. Yet I shall try to describe their sense in plain words. In the perception of an object we 
conceive the notion of an entity by a process of abstracting from the particular qualities of the object. We also 
recognize the possibility of an indefinite repetition of the conception of entities. In these notions lies the source 
of the concept of natural numbers.}
\end{quote}

Brouwer's position raises a puzzle: if mathematical truth is grounded in mental constructions, what 
becomes of the logical principles traditionally used to establish mathematical truths? This question motivated the work 
of Arend Heyting, a student of Brouwer who sought to give a formal account of intuitionistic mathematics. In 1930, 
Heyting \textit{\parencite{heyting1930a,heyting1930b,heyting1930c}} published a systematic formalization of a logic for intuitionism (something Brouwer himself would likely have been 
skeptical of), including propositional and predicate logic, alongside formal systems for arithmetic and analysis. His 
contribution was to articulate the logical consequences of Brouwer’s philosophical position, showing how mathematical 
reasoning could be governed by precise rules that reflected the constructive character of mathematical knowledge. 
In particular, the law of excluded middle could no longer be assumed indiscriminately, since establishing a 
disjunction requires a construction that establishes one of its alternatives. 

Heyting's work thus introduced a distinction between the formal structure of a proof and the constructive meaning that its 
logical rules were intended to capture. An intuitionistic proof of an implication, for example, requires a construction 
that transforms a proof of its antecedent into a proof of its consequent; a proof of a disjunction requires a proof of at 
least one of its disjuncts. These interpretations connect the validity of logical inferences to the constructions that 
justify their conclusions. In this sense, Heyting provided a framework within which Brouwer's epistemological commitments 
could be investigated through formal logic, even though Brouwer himself regarded mathematical intuition as prior to its 
formal representation. 

Gerhard Gentzen's \textit{\parencite{gentzen1935a,gentzen1935b}} subsequent work brought these concerns into the development of modern proof theory. Through his 
systems of natural deduction and sequent calculus, Gentzen provided formal frameworks for investigating the structure of 
mathematical proofs, including both classical and intuitionistic reasoning. His work on the elimination of cuts, 
in particular, demonstrated that formal derivations possess structural properties that can be studied independently 
of the particular mathematical claims being proved. 
\subsection{Natural deduction, cut elimination, and Hilbert's paradise}
Gentzen's natural deduction systems were developed to formalize mathematical reasoning through assumptions and their 
discharge. To prove an implication $A\to B$, for example, one assumes $A$, derives $B$, and then discharges the assumption:

\[
\frac{
  \begin{array}{c}
  [A]^i\\
  \vdots\\
  B
  \end{array}
}{
  A\to B
}\;(\to I,i)
\qquad
\frac{A\to B\qquad A}{B}\;(\to E).
\]
The introduction rule specifies how an implication can be established, while the elimination rule specifies how it can be 
used. Analogous pairs of rules govern conjunction, disjunction, and the quantifiers. In this way, the structure of a 
mathematical proof can be studied through the rules by which propositions are introduced and used.

However, Gentzen's investigation of proof normalization led him to develop a second formalism, the sequent calculus, 
which provided a particularly powerful framework for analyzing derivations in both classical and intuitionistic logic. 
A sequent has the form
\[
\Gamma\vdash A,
\]
where $\Gamma$ is a collection of assumptions and $A$ is the conclusion. The notation makes explicit the relation between 
assumptions and conclusions that natural deduction tracks through its subproofs. More importantly for our purposes, it 
permits the structure of a proof to be investigated independently of the particular mathematical content of its formulas.
Consider the following rules for conjunction and implication:

\[
\frac{\Gamma\vdash A\qquad\Gamma\vdash B}
     {\Gamma\vdash A\land B}\;(\land R)
\qquad
\frac{\Gamma,A,B\vdash C}
     {\Gamma,A\land B\vdash C}\;(\land L)
\]

\[
\frac{\Gamma,A\vdash B}
     {\Gamma\vdash A\to B}\;(\to R)
\qquad
\frac{\Gamma\vdash A\qquad\Gamma,B\vdash C}
     {\Gamma,A\to B\vdash C}\;(\to L).
\]
Here, the left and right rules describe how the same logical operation functions when a formula occurs among the assumptions 
or as the conclusion. The remaining rules have analogous forms. For intuitionistic logic, the right side of a sequent 
contains at most one formula; the classical sequent calculus permits multiple formulas on the right. This distinction is 
important because the choice of logical system determines which mathematical inferences are admissible. 
In particular, the intuitionistic calculus does not admit the unrestricted law of excluded middle as a theorem.

These positions therefore motivate two questions that proof theory allows us to investigate separately. What can be 
derived from a given mathematical system (a la Cantor, via Hilbert), and what restrictions should be placed on the means 
through which a mathematical conclusion is established (a la Kronecker, via Brouwer)? The classical and intuitionistic 
calculi provide different answers to the second question while permitting a common mathematical investigation of the first. 
The cut elimination theorem is especially significant in this respect, since it establishes a fundamental structural 
property of derivations in both systems.

The rule that the theorem eliminates is known as \textit{cut}:

\[
\frac{\Gamma\vdash A\qquad\Delta,A\vdash C}
     {\Gamma,\Delta\vdash C}\;(\mathrm{Cut}).
\]
This rule permits a proposition established in one derivation to be used as an assumption in another. The proposition 
$A$ disappears from the conclusion, while the assumptions required to establish it are incorporated into the resulting 
derivation. The rule is useful for constructing proofs, since intermediate lemmas can shorten and organize a derivation. 
Gentzen's Hauptsatz establishes that every derivation in the standard sequent calculi for classical and intuitionistic 
logic can be transformed into one that contains no applications of cut.

We prove the intuitionistic version \textit{\parencite{troelstra2000}}. Let $\mathsf{LJ}$ denote a single-succedent sequent calculus for first-order logic, with formulas generated from atomic formulas and $\bot$ using $\land,\lor,\to,\forall,\exists$. Negation is understood as an abbreviation for $A\to\bot$. Contexts are finite sets of formulas, so exchange and contraction are implicit. Weakening is admissible. We use the standard context-sharing presentation of the logical rules; these rules are equivalent to the usual formulations in the presence of the structural properties of the calculus.

The initial sequents and the rule for falsity are

\[
\frac{}{\,\Gamma,A\vdash A\,}\;(\mathrm{Id})
\qquad
\frac{}{\,\Gamma,\bot\vdash C\,}\;(\bot L).
\]

The propositional rules are

\[
\frac{\Gamma,A,B\vdash C}
     {\Gamma,A\land B\vdash C}\;(\land L)
\qquad
\frac{\Gamma\vdash A\qquad\Gamma\vdash B}
     {\Gamma\vdash A\land B}\;(\land R)
\]

\[
\frac{\Gamma,A\vdash C\qquad\Gamma,B\vdash C}
     {\Gamma,A\lor B\vdash C}\;(\lor L)
\]

\[
\frac{\Gamma\vdash A}
     {\Gamma\vdash A\lor B}\;(\lor R_1)
\qquad
\frac{\Gamma\vdash B}
     {\Gamma\vdash A\lor B}\;(\lor R_2)
\]

\[
\frac{\Gamma\vdash A\qquad\Gamma,B\vdash C}
     {\Gamma,A\to B\vdash C}\;(\to L)
\qquad
\frac{\Gamma,A\vdash B}
     {\Gamma\vdash A\to B}\;(\to R).
\]

The quantifier rules are

\[
\frac{\Gamma,A[t/x]\vdash C}
     {\Gamma,\forall x\,A\vdash C}\;(\forall L)
\qquad
\frac{\Gamma\vdash A[y/x]}
     {\Gamma\vdash\forall x\,A}\;(\forall R)
\]

\[
\frac{\Gamma\vdash A[t/x]}
     {\Gamma\vdash\exists x\,A}\;(\exists R)
\qquad
\frac{\Gamma,A[y/x]\vdash C}
     {\Gamma,\exists x\,A\vdash C}\;(\exists L).
\]

Here, $A[t/x]$ denotes capture-avoiding substitution of the term $t$ for the free occurrences of $x$ in $A$. In $\forall R$, the eigenvariable $y$ must not occur free in $\Gamma$; in $\exists L$, it must not occur free in either $\Gamma$ or $C$. Eigenvariables may always be renamed to satisfy these conditions.

\begin{theorem}
If $\Gamma\vdash C$ is derivable in $\mathsf{LJ}$, then it has a derivation containing no applications of $\mathrm{Cut}$.
\end{theorem}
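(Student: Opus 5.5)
The plan is to prove the admissibility of Cut, which gives the theorem at once. Given a derivation of $\Gamma\vdash C$ that uses Cut, we eliminate the cuts one at a time, always starting with a topmost one: a cut whose two premises are themselves cut-free. So it suffices to show the following claim. If $\Gamma\vdash A$ and $\Delta,A\vdash C$ have cut-free derivations, then $\Gamma,\Delta\vdash C$ has a cut-free derivation. Repeatedly replacing topmost cuts then removes every cut.

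Before the main argument we need some structural lemmas, each proved by induction on cut-free derivations.
\begin{enumerate}[label=(\roman*)]
\item \emph{Height-preserving weakening.} If $\Gamma\vdash C$ has a cut-free derivation of height $n$, then so does $\Gamma,B\vdash C$. Eigenvariables are renamed where necessary.
\item \emph{Substitution.} If $\Gamma\vdash C$ is derivable cut-free with height $n$, then so is $\Gamma[t/y]\vdash C[t/y]$, again renaming eigenvariables away from $t$.
\item \emph{Height-preserving inversion} for the invertible rules $\land L$, $\lor L$, $\exists L$, $\to R$, $\land R$ and $\forall R$. For $\to L$, only the right premise is inverted: from $\Gamma,A\to B\vdash C$ we obtain $\Gamma,B\vdash C$.
\end{enumerate}
Because contexts are sets, contraction is automatic. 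The context-sharing rules mean the two premises always carry the same $\Gamma$.

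The claim is proved by a main induction on the complexity of the cut formula $A$. Inside it sits a subinduction on the sum of the heights of the two premise derivations. The cases are as follows.
\begin{enumerate}[label=(\alph*)]
\item \emph{Axiomatic cases.} Either premise is Id or $\bot L$. Then the conclusion follows directly, possibly using weakening. For instance, if the right premise is Id with $C=A$, the conclusion $\Gamma,\Delta\vdash A$ is the left premise weakened.
\item \emph{$A$ not principal in the left premise.} The last rule there is a left rule acting on $\Gamma$. We permute the cut upward into its premise or premises, which have smaller height, and reapply the rule. The side conditions on eigenvariables are handled by renaming.
\item \emph{$A$ not principal in the right premise.} This is symmetric: we permute the cut into the right premise.
\item \emph{$A$ principal on both sides.} Here we reduce to cuts on proper subformulas.
\begin{itemize}
\item For $A\to B$: from the left premise $\Gamma,A\vdash B$ and the right premises $\Delta\vdash A$ and $\Delta,B\vdash C$, we cut on $A$ and then on $B$.
\item For $\forall x\,A$: we use the substitution lemma on the eigenvariable derivation $\Gamma\vdash A[y/x]$ to get $\Gamma\vdash A[t/x]$, and then cut on $A[t/x]$.
\item For $\exists x\,A$: this case is dual to $\forall$.
\item For $\land$ and $\lor$: these are immediate.
\end{itemize}
\end{enumerate}
In each principal case the new cut formulas have lower complexity, so the main induction hypothesis applies. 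Note that $A[t/x]$ has the same complexity as $A$ and lower complexity than $\forall x\,A$.

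The main obstacle is the bookkeeping that contraction causes when $A$ remains in the context of a premise. In the principal case for $\to L$, for example, the right premise has the form $\Delta,A\to B\vdash A$ with $A\to B$ still present, because contexts are sets. The fix is to first cut the original left derivation against the premise $\Delta,A\to B\vdash A$. This is a cut on the same formula, but its height sum is smaller, so it is justified by the subinduction and gives $\Gamma,\Delta\vdash A$. Only after removing the residual copy this way do we perform the cuts on subformulas. The same device handles the implicit contraction in every case where the principal formula is retained.

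The induction measure is lexicographic: first the complexity of the cut formula, then the height sum. This ordering is what makes these steps legitimate, since a subformula cut may produce derivations of arbitrary height but lowers the first component. I would check this ordering carefully in each case of the induction.
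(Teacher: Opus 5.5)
Your proposal is correct and follows the paper's proof: cut admissibility for cut-free premises by lexicographic induction on (rank of the cut formula, sum of heights), with the same axiom, permutation and principal cases; your explicit handling of a principal formula retained in the premises, and of the substitution lemma in the quantifier cases, fills in steps the paper leaves implicit. One correction: lemma (iii) is never used, and its height-preserving form is false for this $\mathsf{LJ}$, whose Id rule allows arbitrary formulas ($P\to Q\vdash P\to Q$ is an instance of Id, but inverting $\to R$ gives $P\to Q,P\vdash Q$, which needs a $\to L$ step), so simply drop it.
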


\begin{proof}
Write $h(\pi)$ for the height of a derivation $\pi$, and define the rank of a formula by

\[
\begin{aligned}
r(P)&=r(\bot)=0,\\
r(A\circ B)&=1+\max\{r(A),r(B)\},
  \quad\circ\in\{\land,\lor,\to\},\\
r(Qx\,A)&=1+r(A),
  \quad Q\in\{\forall,\exists\}.
\end{aligned}
\]

We first prove that cut is admissible for cut-free derivations. Suppose that $\pi$ is a cut-free derivation of 
$\Gamma\vdash A$ and $\rho$ is a cut-free derivation of $\Delta,A\vdash C$. We prove that 
$\Gamma,\Delta\vdash C$ has a cut-free derivation by lexicographic induction on
\[
\bigl(r(A),\,h(\pi)+h(\rho)\bigr).
\]
If $\pi$ ends in an initial sequent, then its context already contains $A$, and the desired conclusion follows 
from $\rho$ by admissible weakening. If $\rho$ ends in an initial sequent, either $C$ already occurs in $\Delta$, 
in which case the conclusion is initial, or $C=A$, in which case the conclusion follows from $\pi$ by weakening.

Suppose next that the cut formula is not principal in the last inference of either derivation. The cut can then be 
permuted above that inference. If the inference has multiple premises, we apply the induction hypothesis to each 
premise containing the relevant occurrence and then reapply the inference. Each new cut has the same rank and a 
strictly smaller sum of derivation heights. We may therefore assume that both final inferences are principal on $A$.

Let's now consider propositional cases ($A=B\land D$, $A=B\lor D$, or $A=B\to D$, proofs in the quantificational 
case are similar).
The principal cut is replaced by one or two cuts on the proper subformulas $B$ and $D$. For example, if $A=B\to D$, we 
first cut $\Delta\vdash B$ and $\Gamma,B\vdash D$ to obtain $\Gamma,\Delta\vdash D$, and then cut this 
with $\Delta,D\vdash C$ to obtain $\Gamma,\Delta\vdash C$. For conjunction and disjunction, we similarly cut on the 
relevant subformulas introduced in the premises. In all such instances, the new cut formulas have strictly smaller
rank than $A$, so the induction hypothesis applies to eliminate the new cuts.

Since the rank of $B[t/x]$ is strictly smaller than that of the quantified formula, the induction hypothesis applies to 
eliminate the cut.
Atomic formulas and $\bot$ have no right-introduction rules. They therefore cannot form an additional principal case. 
All possibilities have been exhausted, establishing cut admissibility for cut-free derivations.
Finally, consider an arbitrary derivation, possibly containing cuts. Proceed by induction on its height, replacing 
each immediate subderivation by a cut-free derivation. If its last inference is logical, reapply that inference to the 
cut-free premises. If its last inference is a cut, apply the cut-admissibility result just proved. The result is a cut-free 
derivation of the same end sequent. Hence every derivable sequent in $\mathsf{LJ}$ has a cut-free derivation.
\end{proof}

The significance of the theorem is that cut-free derivations possess a strong structural property. Every formula occurring 
in such a derivation is a subformula of a formula in the end sequent, up to the substitution instances introduced by 
quantifier rules. Intermediate lemmas may considerably shorten a proof, but the cut-elimination theorem shows that they 
are unnecessary for establishing derivability in the calculus. Their use can be eliminated through transformations of 
the derivation itself.

The corresponding result for natural deduction (more elusive) is the normalization theorem. An introduction rule followed immediately 
by its corresponding elimination rule constitutes a detour: for example, one may derive $A\land B$ from proofs of $A$ 
and $B$, and then immediately extract $A$. Normalization replaces this sequence with the original proof of $A$. Cut 
elimination and normalization are closely related, although they are theorems concerning different proof formalisms. 
Gentzen's sequent calculus method was particularly important because it gave a uniform approach to the classical and 
intuitionistic cases.

As fruits of the long march of the foundational crisis, these results give us a mathematical vocabulary for studying the internal structure of proofs. Until recently, many 
questions about that structure were difficult to pursue systematically: proofs were produced to establish particular 
results, and their internal organization was usually examined in the context of individual arguments. AI-generated 
formal mathematics changes the scale of this inquiry. We now have large collections of machine-written derivations 
whose validity can be checked and whose structure can be examined directly.

If the proof assistant settles the question of formal correctness relative to its 
environment, do machine-generated derivations exhibit distinctive structural regularities? How do they compare with those 
produced by human mathematicians? And can the tools of proof theory help us answer these questions empirically? The rest of this
paper suggests that, within limits, they can.

\section{An empirical approach to proof theory}
\subsection{Proofs as programs}
Before we can measure proofs, we need to say what a formal proof is in practice. A \textit{proof assistant} is a computer
program that checks mathematical proofs. The user states a theorem in a precise formal language and writes a proof
of it. The program then checks every step of that proof against a small set of logical rules. If any step does not
follow from the rules, the proof is rejected. The proof assistant we will use to study machine-generated proofs is Lean \textit{\parencite{demoura2021lean4}}.

Lean rests on an idea known as the Curry--Howard correspondence, or \textit{propositions as types} \textit{\parencite{curry1958,howard1980}}. Recall the
natural deduction rules for implication from the previous section. To prove $A\to B$, we assume $A$, derive $B$, and
discharge the assumption. To use $A\to B$, we combine it with a proof of $A$ and obtain $B$. Now consider a function
in a programming language. A function of type $A\to B$ takes an input of type $A$ and returns an output of type $B$.
Writing such a function means assuming a variable $h$ of type $A$ and building something of type $B$ from it. Applying
the function to an input of type $A$ gives an output of type $B$:
\[
\frac{
  \begin{array}{c}
  [h:A]\\
  \vdots\\
  q:B
  \end{array}
}{
  \lambda h.\,q\;:\;A\to B
}\;(\to I)
\qquad
\frac{f:A\to B\qquad p:A}{f\,p\;:\;B}\;(\to E).
\]
Here, $\lambda h.\,q$ is the function that takes $h$ and returns $q$, and $f\,p$ is the function $f$ applied to $p$.
A proposition is a type, and a proof of the proposition is a program of that type. Checking a proof is the same as
checking that a program has the type it claims to have. The same correspondence extends to conjunction, disjunction,
the quantifiers, and much more. Lean uses a rich version of it known as dependent type theory \textit{\parencite{coquand1988}}, in which types may
depend on values (e.g. the type of vectors of length $n$).

In practice, people rarely write these programs by hand. Instead they write \textit{tactics}, which are commands that
build the program step by step. Consider the following small theorem:
\begin{verbatim}
theorem two_steps (a b c : Nat) (h1 : a <= b) (h2 : b <= c) : a <= c + 1 := by
  have h3 : a <= c := Nat.le_trans h1 h2
  exact Nat.le_succ_of_le h3
\end{verbatim}
The theorem says that if $a\leq b$ and $b\leq c$, then $a\leq c+1$. The proof first establishes the intermediate claim
$h_3 : a\leq c$ using transitivity. It then concludes using the fact that anything at most $c$ is at most $c+1$. Lean
reads these tactics and produces a single program, called the \textit{proof term}. This process is called
\textit{elaboration}. A small trusted part of Lean, the kernel, then checks the proof term. The kernel never sees the
tactics.

So, the text of a Lean file is what a person (or a machine) typed. The proof term is the
derivation that the text describes. To get the proof term, the file has to be compiled. It also has to be compiled
against the exact version of the library it was written for, since names and lemmas change between versions.

The library we will compile against when using Lean is mathlib \textit{\parencite{mathlib2020}}. Mathlib is a single large library of formalized 
mathematics, written by hundreds of human contributors over several years. It covers algebra, analysis, topology, number 
theory, and much else. Its proofs are reviewed by other contributors before they are accepted, and it has its own 
conventions for how proofs should be written. At least until recently, almost all serious work in Lean, human or machine, 
was built on top of it--in some sense, it constitutes the ``gold standard'' for Lean proofs (though of course, in some sense, 
it is also an arbitrary choice, as we will see).

\subsection{The data and language}
As discussed, our human baseline is mathlib, at the version used by OpenAI (commit \texttt{d13f23b}) \textit{\parencite{mathlib4}}. The main
AI-written corpus of mathematics we will use for comparison is OpenAI's \texttt{openai/math} library \textit{\parencite{openai_math}}. This repository contains 
524 projects, and we compiled 96 of them, chosen at random (which was about as large a sample I could run on my personal 
MacBook Air). These samples contain about 36,000 theorems. I also compiled three smaller AI-written corpora from other 
systems. The first is a formalization of sphere packing by Math Inc's Gauss \textit{\parencite{spherepacking_lean,hariharan2026spherepacking}}. The second is a set of solutions to the 
2025 Putnam competition by AxiomProver \textit{\parencite{axiom_putnam2025}}. The third is a set of solutions to the 2025 IMO by Aristotle \textit{\parencite{harmonic_imo2025,achim2025aristotle}}. 

Of course, it must be acknowledged (as we began to above) that mathlib is a polished library, and it is not the only kind of
human Lean. We therefore added a second human reference,
made of research projects (the Carleson project \textit{\parencite{carleson_lean,becker2024carleson}} and the formalization of sphere eversion \textit{\parencite{sphere_eversion_lean}}). These were written by
humans for a particular result, much like the AI-written projects (though, compared to the text of mathlib, we suspect 
these libraries are very scarce). To control the quality of the data, the unit of every group comparison is the project. Theorems in the same project share authors and habits, so they are not independent observations. I also only used
projects with at least 20 theorems. When I measure single proofs, I only use proofs of at least 10 steps. Imposing these 
standards, there are about 143,000 proofs in mathlib and 26,000 in OpenAI's library.

The simplest comparison between human (as in, mathlib-written) and AI-generated proofs looks only at their text. A language 
model assigns a probability to each next word (or \textit{token}) of a text, given the words before it. If the model finds 
a text surprising, the text ``costs'' many bits per token. I call this cost, in bits per token, the \textit{energy} of
the text. We trained a simple language model \textit{\parencite{hindle2012naturalness}} (a 5-gram model, which looks 
at the previous four tokens) on mathlib's source code. We then asked how surprising it found OpenAI's source code.

OpenAI's text costs 7.00 bits per token, and mathlib's own (held-out) text costs 5.15. A
sample of OpenAI's text costs more than a sample of mathlib's 94\% of the time. This probability is called the
\textit{AUC} \textit{\parencite{hanley1982auc}}: the chance that a randomly chosen member of one group scores higher than a randomly chosen member of the
other, so that 0.5 means no difference and 1 means complete separation. But
76\% of the gap comes from identifiers. These are the names of lemmas, hypotheses, and definitions. Two proofs can
use the same steps and different names. They can also use different steps and the same names. The text therefore
mostly measures how proofs are named. To measure how they work, we need the proof terms--which, as discussed previously,
are programs.

\begin{figure}[t]
\centering
\includegraphics[width=\linewidth]{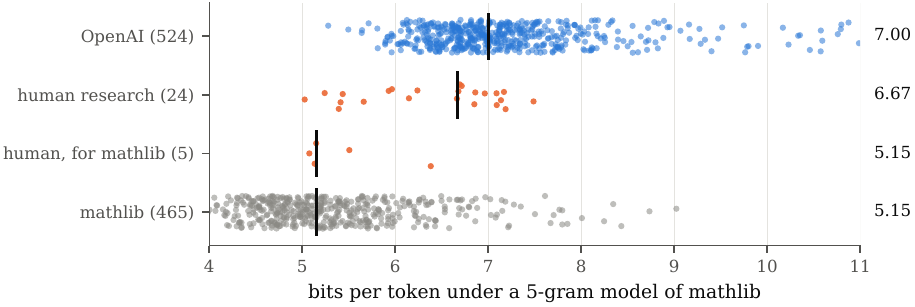}
\caption{Text energy of each project (one dot per project with at least 20 theorems) under a 5-gram model of mathlib's source. Each mathlib project is scored by a model trained on the rest of mathlib. Ticks mark medians; values above 11 bits are drawn at the edge. The row ``human, for mathlib'' holds the directories of the human research projects that were written to be added to mathlib.}
\label{fig:text}
\end{figure}

One might suspect that OpenAI simply writes about different mathematics than mathlib does. This does not explain
the gap. Mathlib and OpenAI's library share 17 areas of mathematics with at least three projects each, and in every one of
them OpenAI's median project costs more. Against projects in its own area, the typical OpenAI project pays 1.53 extra
bits per token. The human research code sits in between (Figure~\ref{fig:text}), at 6.67 bits per token (in the text study, it also includes the FLT and PFR projects \textit{\parencite{flt_lean,pfr_lean}}). Its gap to OpenAI is much
smaller (0.33 bits per token, with an AUC of 0.74). Interestingly, the parts of the human research projects that were
written to be added to mathlib later cost what mathlib costs (5.15, over two projects). So the language model measures how
closely a text follows mathlib's conventions.

\subsection{What is already measured}
Most of the quantities in this paper are not new. They have been computed, often at scale, by the people who build
automated provers and the tools around proof assistants, where they serve as objectives to improve. The first
language-model prover for Lean already found that more than a third of its proofs were shorter, and produced smaller
proof terms, than mathlib's own \textit{\parencite{han2022pact}}. Recent provers
\textit{\parencite{hubert2025alphaproof,ren2025deepseekprover2,wang2025kimina,lin2025goedel,chen2025seedprover}} often write
proofs that are correct but long, and a line of work now shortens them. ImProver rewrites proofs to improve a chosen
metric such as their length \textit{\parencite{ahuja2024improver}}. ProofOptimizer trains a model to simplify proofs,
and more than halves their length on competition benchmarks \textit{\parencite{gu2025proofoptimizer}}. LeanPolish
releases tens of thousands of verified local edits that compress Lean proofs \textit{\parencite{bourigault2026leanpolish}}.
Others refactor generated proofs into reusable lemmas \textit{\parencite{fu2026proofrefactor}} or evaluate their quality
with tools \textit{\parencite{caldwell2026proofjudge}}, and mathlib's own linters flag tactics that change nothing
\textit{\parencite{mathlib4}}. Kallel and El Louadi measure how much kernel-checked proof stands behind the statements
of a release of AI-written results \textit{\parencite{kallel2026verification}}. Older work on interactive theorem
provers measured proof scripts \textit{\parencite{wiedijk2000debruijn,aspinall2016metrics}}, refactored them
\textit{\parencite{whiteside2011refactoring}}, and mined or compressed them by introducing lemmas and definitions
\textit{\parencite{kaliszyk2013lemma,blanchette2015mining,vyskocil2010compression}}. The work on cut introduction
compresses proofs by finding the cuts they lack \textit{\parencite{hetzl2014quantified,ebner2018lemmas}}, and the
realized compression below is a crude relative of it.

What these share is an aim: a shorter proof, a faster check, a better prover. My debt to these prior pieces of work is large because
they shaped a core intuition I had guiding the work in this paper--that AI-generated proofs are somehow large in an ``unnatural''
way (at least for humans), and that it was possible to measure the size of a proof term or the number of its wasted steps as formalized 
in Lean in a rigorous way. What I try to add is a way of using such
quantities as \textit{measurements} of proofs rather than as targets to optimize them against. That means comparing them against how much a reference
library differs from itself, or against the proof's own cut-free form; checking them against theorems they must obey;
and controlling for how the proofs were compiled and for the tactics that wrote them. The hope is that, made reliable
in this way, they can eventually be used to study properties of formal proofs themselves at scale.

\subsection{Preparing the corpora}
Every number in this section depends on how the corpora were read, cleaned, and compiled, so I describe this in some
detail. All of the code for this study (the language model, the Lean programs that extract proof terms, and the
statistics) was written by Claude Code \textit{\parencite{claudecode}}, Anthropic's coding agent, working at my direction. I chose the questions, the
measures, and the controls. The code, the job files, the regression tests, and the result files behind every table
and figure are available in the accompanying repository \textit{\parencite{aryal2026code}}, with instructions for
rerunning them. This way of working has one particular risk: code that is cheap to write is also cheap to
change, and a small change in how a corpus is cleaned can move a result. So the controls had to be maintained
deliberately. Every corpus went through the same pipeline, against the same mathlib, with the same thresholds. When a
part of the pipeline changed, every corpus was rerun through it. Where a
result moved because of such a change, I report the change where the result appears.

The text study works directly on the source files, without running Lean, so it covers all 524 OpenAI projects. The text was prepared as follows:
\begin{enumerate}
    \item
    A lexer (a program that splits source text into tokens) strips comments and string literals, and splits each file into declarations (theorems, lemmas, and
    definitions).
    \item
    Identifiers are split into the pieces mathlib uses to build its names. For example, \texttt{Nat.le\_succ\_of\_le}
    becomes \texttt{Nat}, \texttt{le}, \texttt{succ}, \texttt{of}, \texttt{le}. A new name built from familiar pieces
    is then cheap, and only genuinely new vocabulary is expensive.
    \item
    A piece the model has never seen is coded as an ``unknown'' token followed by its spelling, character by
    character. So the energy of a text is a proper code length: the number of bits needed to transmit it.
    \item
    Numerals of four or more digits are coded digit by digit and booked separately. Otherwise, a few proofs that carry
    long numerical certificates would dominate the cost of their whole project.
    \item
    At most 1,000 declarations are sampled from each non-mathlib project. All of mathlib is used.
    \item
    Mathlib's projects are divided into five folds. Each mathlib project is scored by a model trained on the other four
    folds. Without this, mathlib would be scored by a model that had already seen it, and its energy would be
    artificially low.
    \item
    The model is an interpolated Kneser--Ney model \textit{\parencite{kneser1995,chen1999smoothing}}, a standard smoothed $n$-gram model. Its order (five) was chosen
    among orders three to six by its cross-entropy on held-out mathlib text.
\end{enumerate}
Each token is also classed as a name piece, a numeral, or an operator. This is how the gap was split above. Of the
pooled gap of 1.83 bits per token, 76\% is carried by name pieces, and syntax adds only 0.28 bits.

The compiled study needs much more care, since a proof term exists only once Lean has elaborated the file. The
pipeline is as follows:
\begin{enumerate}
    \item
    All corpora are compiled with Lean 4.34.1 against mathlib at commit \texttt{d13f23b}, which I built on my machine.
    OpenAI's full library has 26 million lines, which is too large to compile with 8 GB of memory, hence the sample.
    \item
    A Lean program (the \textit{driver}) elaborates each file and walks every proof term. For each theorem it writes
    out the gestures, the cut records, and the proof-term measures described below.
    \item
    A \textit{module} is a single Lean file, and its \textit{environment} is the set of definitions and notation it
    can see. The OpenAI projects were compiled in one shared environment, one module after another. This is faster than
    Lean's normal build, which gives each module an environment made only of its own imports. In a shared environment,
    notation declared in one module can leak into the next. The human projects were compiled module by module, each
    with its own imports, as Lean's build tool would. To check whether the
    compilation mode matters, I compiled nine OpenAI projects both ways. The gestures were identical for 97.2\% of 4,089
    theorems, and the divergence moved from 280.4 to 280.5 bits.
    \item
    Several engineering failures had to be handled without biasing the sample. Lean never frees memory for repeated
    imports, so the module-by-module driver restarts every eight modules. A module that stalls is skipped after a time
    limit and recorded; one of the 97 drawn OpenAI projects stalled for more than 45 minutes and is excluded. An early
    version of the driver also used a Lean interface that silently drops error messages, which could have hidden
    failing theorems. It was replaced.
    \item
    Theorems containing \texttt{sorry} (Lean's placeholder for a missing proof) are dropped everywhere. Theorems that do
    not compile under this mathlib are dropped, at the rates given below.
    \item
    Lean generates auxiliary theorems automatically, such as equation lemmas for definitions and the
    \texttt{\_proof\_n} lemmas it splits off long proofs. These are folded back into the declaration that produced them,
    so that a proof is not counted twice because the compiler cut it in two.
    \item
    As discussed below, \texttt{omega} and \texttt{lia} are merged into one gesture.
\end{enumerate}
I also checked a sample of extracted proofs by hand against their source.

At the end of this pipeline, each theorem is represented by its proof term, the program that Lean's kernel actually
checks. The question is then what to measure on these terms, given that the corpora prove different theorems.

\subsection{Gestures}
A proof term is a tree. Each node is one step of the proof, such as the use of a lemma, the application of a function,
or the introduction of an assumption. The children of a node are the subproofs it depends on. Most of the detail at
each node is specific to the theorem being proved. But, we want to compare proofs of different theorems, so we must find a way to strip this
detail away.

\begin{definition}
We assume a proof term is a tree.
Then, define a \textit{gesture} as a node of a proof term with three changes made to it. One, for a gesture, all names of 
hypotheses are erased. Then, every type is cut down to its head connective (e.g. an equation becomes $=$, an implication becomes 
$\to$). Finally, every hypothesis is labelled by how often it is used: $0$ (never), $1$ (once), or $\omega$ (more than once).
\end{definition}

For example, the transitivity of equality takes a proof of $a=b$ and a proof of $b=c$ and returns a proof of $a=c$.
Every use of it becomes the same gesture,
\[
\mathtt{Eq.trans}(=,=)\Rightarrow\;=,
\]
whatever $a$, $b$, and $c$ are.

Erasing names and keeping only the shape of each step is Hilbert's attitude toward geometry, applied to proofs. For Hilbert, it did not matter whether the axioms spoke of points
and lines or of tables and chairs \textit{\parencite{blumenthal1935}}; what mattered were the relations the axioms laid down. A gesture likewise keeps the
relations between the steps of a proof and throws away what the steps are about. Of course, Frege's objection applies \textit{\parencite{frege1980}}
here too. Two proofs with the same gestures may be about entirely different things. We accept this loss because the corpora prove
different theorems, so the shape of their steps is the only thing that can be compared across them.

We walk every proof term in full and record the gesture at each node. Each project then has a \textit{gesture mix},
the proportion of its steps taken up by each gesture. To compare corpora, I kept the 300 most common gestures and gave
every project equal weight (so that a few large projects cannot dominate). We then compare the spread of OpenAI's
projects with the spread of mathlib's using the Kullback--Leibler divergence \textit{\parencite{kullback1951}}, which measures in bits how different two
distributions are. If each corpus's gesture mixes are approximated by a normal (Gaussian) distribution, this divergence splits exactly into a sum over a set of directions.
Each direction is a particular combination of gestures in which OpenAI's projects vary differently from mathlib's. I
call these directions \textit{eigenproofs}. The bits carried by the eigenproofs add up to the total divergence.

Any two sets of projects differ somewhat, and a sample of 96 projects differs from its population by chance alone. So
the divergence has to be compared with how much mathlib differs from itself. To measure this, we draw random sets of 96 mathlib
projects, and measure their divergence from the rest of mathlib. This is the \textit{null}.

OpenAI's gesture mix is 280 bits from mathlib's. Random sets of mathlib projects of the same size are 49 bits away (and
at most 58 bits, in ten draws). OpenAI's library is therefore 5.8 times as far from mathlib as the null. Human research
code is 1.8 times as far from mathlib as its own null. About a third of OpenAI's divergence, and most of human research
code's excess over its null, comes from the certificates that tactics write into proof terms (see ``Is it only tooling?''). Figure~\ref{fig:spectrum} shows how the 280 bits are spread
over the eigenproofs. Each of the first twelve exceeds the largest value the null produced at its rank. A theorem's
\textit{score} on an eigenproof is the projection of its gesture mix onto that direction. The table gives the share of
theorems whose score lies more than two standard deviations (s.d.) above mathlib's mean. The first four eigenproofs
are:
\begin{center}
\begin{tabular}{llcccc}
 & & & \multicolumn{3}{c}{theorems beyond $+2$ s.d.} \\
 & eigenproof & bits & mathlib & OpenAI & human \\
\hline
1 & integer arithmetic inside chains of equations & 23.6 & 0.7\% & 6.3\% & 0.5\% \\
2 & rewriting under binders & 17.5 & 0.2\% & 4.2\% & 4.4\% \\
3 & closing by evaluating numerals & 15.5 & 0.6\% & 8.4\% & 11.2\% \\
4 & normalizing, then closing & 13.5 & 0.9\% & 13.5\% & 12.8\% \\
\end{tabular}
\end{center}
Only the first eigenproof belongs to OpenAI alone. It is the pattern of proving an equation by a call to an arithmetic
decision procedure (Lean's \texttt{omega} \textit{\parencite{pugh1991}}) in the middle of a chain of equations. The second rewrites inside a
\textit{binder}, a $\lambda$ that introduces a variable. The others are habits that OpenAI shares with human research
code, to a similar degree. When the data is split in half at random, the first direction
is recovered on each half with a correlation of 0.95.

\begin{figure}[t]
\centering
\includegraphics[width=\linewidth]{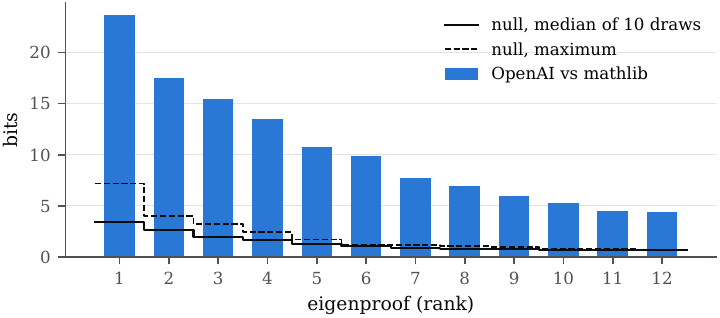}
\caption{Bits carried by OpenAI's first twelve eigenproofs against mathlib, with the null: the same decomposition for random sets of 96 mathlib projects against the rest of mathlib (median and maximum over ten draws).}
\label{fig:spectrum}
\end{figure}

One choice affects these numbers. Mathlib recently switched from \texttt{omega} to a newer tactic, \texttt{lia},
which does the same job. OpenAI's library still uses \texttt{omega}. If the two are counted as different gestures, the
divergence rises from 280 to 316 bits. I merged them, since the two tactics decide the same class of problems. The 36
bits between the two figures come entirely from a change in mathlib's tooling.

The gestures also let us count some simpler things. Each hypothesis is used never, once, or more than once, and each
intermediate claim is a cut (we will return to cuts below). The medians over projects are:
\begin{center}
\begin{tabular}{lccccc}
 & mathlib & OpenAI & human & AUC vs mathlib & AUC vs human \\
\hline
steps per theorem & 18.5 & 27.0 & 29.0 & 0.75 & 0.40 \\
unused hypotheses & 8\% & 12\% & 14\% & 0.64 & 0.41 \\
intermediate claims per 100 steps & 0.36 & 1.20 & 0.64 & 0.86 & 0.79 \\
unused claims & 0.0\% & 0.9\% & 1.0\% & 0.74 & 0.50 \\
\end{tabular}
\end{center}
Against mathlib, OpenAI's proofs are longer, state more claims, and leave more hypotheses unused. The claims and the
unused hypotheses survive a control for automation, but the length mostly does not (see ``Is it only tooling?''). Against human research
code, only the extra claims stand out (AUC 0.79). Human research code is as long (AUC 0.40) and leaves as many
hypotheses unused (AUC 0.41).

\subsection{What mathlib confounds}

So far, every comparison has been a comparison with mathlib. As discussed, mathlib is in some sense the gold standard
for Lean proofs. But, nonetheless, it cannot be relied on its own to measure the \textit{intrinsic} properties of mathematical
proofs--in fact, at least five of its properties could each produce a difference between OpenAI and mathlib that has nothing to
do with machines.

\begin{enumerate}
     \item 
     Mathlib has a \textit{house style}. Its proofs are reviewed, shortened, and rewritten to follow its conventions
     before they are accepted. A proof that is not written the way mathlib writes proofs costs more under any model of
     mathlib, whether or not it is a worse proof. Energy therefore measures conformity to mathlib. 
     The \texttt{omega} and \texttt{lia} example above is a small instance of this.
     \item 
     Mathlib has a different \textit{purpose}: it is a library, and its lemmas are stated in general form so that others
     can use them. OpenAI's projects (like the Putnam and IMO solutions) are aimed at particular results. A research project
     leaves hypotheses unused and states claims it does not need, because it is unfinished.
     The human research code controls for this, at least partly. As we saw, unused hypotheses and dead claims are as common
     in Carleson and sphere eversion as in OpenAI's library. They may therefore be features of research formalization in
     general, rather than of machines. Of course, the human reference is small: 11 compiled projects from two efforts, against 1,193 mathlib
     projects. Its null is correspondingly large, and every interval around it is wide.
     \item
     Mathlib is tied to a \textit{version}. Every corpus has to be compiled against the same mathlib to be compared.
     The other AI corpora were written for older versions of Lean (4.20 to 4.28), and some of their theorems no longer
     compile. Sphere packing loses 20\%, Putnam 11\%, and IMO 67\%. Even Carleson, written by humans, loses 222 of its 4,898
     theorems. These failures are biased: longer proofs are more likely to break, so the surviving samples lean towards
     shorter proofs.
     \item
     A proof term records the \textit{tools} as much as the author. When a proof calls \texttt{omega}, the tactic
     writes a \textit{certificate} into the proof term (a proof that its answer is correct), and that certificate has
     its own gestures. Our first eigenproof is, in part, a measure of how often a tactic is called. I separate the two below (``Is it only
     tooling?''), by collapsing each certificate into a single step.
     \item
     Finally, we must also consider the question of \textit{subject matter}. Mathlib covers some areas much more heavily than OpenAI does. The text study controls
     for this by comparing within areas, as above. The gesture study does not, and it should be read with this in mind.
\end{enumerate}
All five of these confounds share one structure. Each of them comes from measuring a proof against \textit{other} proofs. 
A gesture mix is only unusual relative to some population, and the population I chose was mathlib. If I had chosen 
differently, the numbers would change. Brahe measured positions against the fixed stars, which are not truly fixed, and 
his data was still usable. But a measure that needs no reference population would remove most of these confounds at once (the tools, as we
will see, remain). I now argue that cuts (and their eliminations, a la Gentzen) give us one.
\subsection{Cuts in Lean}

Recall the cut rule:
\[
\frac{\Gamma\vdash A\qquad\Delta,A\vdash C}
     {\Gamma,\Delta\vdash C}\;(\mathrm{Cut}).
\]
A cut proves an intermediate claim $A$ and then uses it as an assumption. This is exactly what the \texttt{have}
tactic does in the example above. The proof first establishes $h_3 : a\leq c$, and then uses $h_3$ to prove the
theorem. Under propositions as types, the proof $p$ of the claim and the proof $q$ that uses it combine into the term
\[
(\lambda h.\,q)\;p,
\]
a function that takes $h$ and returns $q$, applied to $p$. Lean has several ways of writing this (\texttt{have}, a
\texttt{let} that names a proof, or a function applied directly to a proof). We count all of them as cuts.

Eliminating a cut means removing the intermediate claim. In terms of programs, we replace every occurrence of $h$ in
$q$ by $p$ itself, written $q[p/h]$. For the example above, the cut-free proof term is
\[
\mathtt{Nat.le\_succ\_of\_le}\;(\mathtt{Nat.le\_trans}\;h_1\;h_2).
\]
How much this costs depends on how often $h$ is used, which is exactly what the labels $0$, $1$, and $\omega$ in our
gestures record. If $h$ is never used, the cut is \textit{dead}, and eliminating it simply deletes $p$. If $h$ is used
once, the cut is \textit{linear}, and $p$ moves to the place where it is used. Then, we say that if $h$ is used many
times, the cut is \textit{shared}, and $p$ is copied once for each use. It seems that only a shared cut can make a proof smaller.

\subsection{Don't eliminate cut!}
Gentzen showed that every cut can be eliminated. But, Statman and Orevkov \textit{\parencite{statman1979herbrand,statman1979lambda,orevkov1982}} later showed that the cost of doing so can be
enormous. In fact, in the worst case, the cut-free proof is larger than any fixed tower of exponentials in the size of the
original.

George Boolos drew a philosophical lesson from this in a short paper titled \textit{Don't eliminate cut} \textit{\parencite{boolos1984}}. He gave an
inference in first-order logic that is valid, and that has a short derivation in a standard natural deduction system.
The derivation fits on a few pages, and it works by proving a handful of lemmas and then using them. Boolos then showed
that any derivation of the same inference in a standard cut-free system is unimaginably long. It could not be written
down in the physical universe, let alone checked. His point was that cut elimination makes lemmas dispensable only in principle. The cut-free derivation is a finite
object, so even Kronecker would grant that it exists. But in Boolos's example, only the derivation with lemmas can be
written down and checked.

This gives us a way of looking at cuts that does not depend on mathlib. Every proof has a cut-free form, and we can ask of each cut whether it earns its place, by comparing the proof with its own
cut-free form. A gesture mix is unusual only relative to a population. A cut is judged
against the proof that contains it. In this sense, I would argue, cuts are more \textit{intrinsic} to a proof
than gestures are.

Of course, Boolos was concerned with the extreme case, in which a cut saves a tower of exponentials. We will see below
that real libraries are nowhere near this case. Most cuts in mathlib are linear (74\%), so they organize a proof
without shortening it. The measures below ask, proof by proof, whether the author's cuts shorten it, and how much
shorter it could have been.

\subsection{Measuring cuts}

The \textit{size} of a proof term is the number of nodes in its tree. For each proof $\pi$, we compute three sizes.
The first is the size of the proof as written, $|\pi|$. The second is the size of its cut-free form,
$|\pi^{\mathrm{cf}}|$, with every cut eliminated. The third is the size of its optimal layout, $|\pi^{\mathrm{opt}}|$.
This is the same proof with the best possible choice among the author's cuts. It keeps every cut that saves space and
removes every cut that does not. (A cut used $u$ times with a proof of size $|p|$ is kept when
$(u-1)\cdot|p| > u+1$, that is, when the copies it saves outweigh the cost of naming it.)

The cut-free form can be very large. Some proofs in mathlib consist of hundreds of nested cuts, and their cut-free forms
are $2^{300}$ to $2^{450}$ times larger than the proofs as written. So we never build it. Instead we use
\textit{hash-consing} \textit{\parencite{filliatre2006hashcons}}, which stores each distinct subterm only once. A subterm that appears a thousand times is stored
once and counted a thousand times. This gives us the exact size, as a very large integer, without writing the proof
out.

\begin{definition}
The \textit{cut compression} of a proof $\pi$ is
\[
\kappa(\pi)=\log_2\frac{|\pi^{\mathrm{cf}}|}{|\pi|}.
\]
Its \textit{compressibility} is
\[
\chi(\pi)=\log_2\frac{|\pi^{\mathrm{cf}}|}{|\pi^{\mathrm{opt}}|},
\]
and its \textit{layout redundancy} is
\[
\rho(\pi)=\log_2\frac{|\pi|}{|\pi^{\mathrm{opt}}|}.
\]
\end{definition}

All three are measured in bits. A proof that is half the size of its cut-free form has $\kappa=1$. The optimal layout is
never larger than the other two, so $\chi$ and $\rho$ are never negative. The three are related by
\[
\kappa=\chi-\rho.
\]
The compressibility $\chi$ is how much a proof \textit{could} save by its cuts. The redundancy $\rho$ is how far the
author's choice of cuts is from the best one. If $\rho>\chi$, then $\kappa<0$, and the proof is larger with its cuts
than without them. The \texttt{have} in our small example is a case of this. The claim $h_3$ is used once, so it saves
nothing, and the extra step only adds to the size.

The quantity $\kappa$ also has a direct reading. When we eliminate every cut, each step of the written proof is copied some
number of times. One can show that $\kappa$ is exactly $\log_2$ of the average number of copies of a written step. So
one bit of cut compression means that, on average, each step would be written twice without the cuts.

\subsection{Computing the cut measures}
The definitions above leave several implementation choices open, and each one affects the numbers. I made them as
follows:
\begin{enumerate}
    \item
    \textit{What counts as a cut.} A cut is a \texttt{have}, a \texttt{let} that binds a proof, or a function applied
    directly to a proof. Whether a term is a proof is decided from the declared type of its statement. A function applied
    to data (e.g. to a number) is substituted without being counted as a cut, because a proof cut can be hidden
    inside it.
    \item
    \textit{What counts as a use.} A use is an occurrence of the cut's variable in a proof. A proof that is mentioned
    only inside a type or a piece of data is counted as a single copy, since eliminating the cut never duplicates
    it. An early version counted every such mention as a copy, which inflated the use counts. This was
    found and fixed before any result was reported.
    \item
    \textit{Five normal forms.} A \textit{normal form} here is a version of the proof with some class of cuts
    eliminated. For each proof, the driver computes the exact size of five versions: as written
    ($N_0$); with dead cuts removed ($N_1$); with linear cuts also removed ($N_2$); with every cut removed ($N_3$, the
    cut-free form); and in the optimal layout ($N_4$). The optimal layout is decided from the innermost cut outwards,
    keeping a cut used $u$ times with a proof of size $|p|$ exactly when $(u-1)\cdot|p|>u+1$. The five sizes split
    $\kappa$ into three parts,
    \[
    \kappa=\underbrace{\log_2\frac{N_3}{N_2}}_{\sigma}
    -\underbrace{\log_2\frac{N_1}{N_2}}_{\ell}
    -\underbrace{\log_2\frac{N_0}{N_1}}_{w},
    \]
    where $\sigma$ is the gain from shared cuts, $\ell$ the cost of naming proofs that are used once, and $w$ the cost
    of proofs that are never used.
    \item
    \textit{Exact sizes, averaged per proof.} Sizes are exact integers of arbitrary size. A corpus figure is the mean,
    over proofs, of each proof's bits. A ratio of summed sizes would be decided by a single mathlib proof whose
    cut-free form is $2^{450}$ times its written size. The first version of this analysis used such ratios, and was
    corrected before publication.
    \item
    \textit{Rates.} Rates such as ``cuts per 100 steps'' divide by the size of the proof with shared subterms counted
    once, so that proofs whose trees are enormous through repetition do not dilute every rate.
\end{enumerate}
The realized compression of the next subsections needs a second, independent run of the driver over the proof terms,
which I call the \textit{hashing pass}. It eliminates every written cut by substitution, and stores the result only through hashes. Every subterm receives a
\textit{canonical hash}, a fingerprint computed from its structure, so that identical subterms receive identical
hashes. A cut variable hashes as the proof it stands for, and an ordinary bound variable hashes as its binder. So the
same subproof is a single node wherever it occurs, whether it was written out twice, named once and used twice, or
substituted from a cut. Two occurrences of a subproof are identified only if their free variables (variables bound outside the subproof)
are bound by the same binders. This ensures that every node shared in the count could actually be named by a \texttt{have} at that
point of the proof. The explicit layout $G$ names a node when its number of references times its size exceeds its size
plus its references plus one, and is then refined greedily up to six times. Each proof is given a budget of 200,000
distinct nodes, and proofs over budget are dropped (10 of 254,147 in mathlib). Logical rank unfolds definitions up to
24 times.

Before the hashing pass was run on any corpus, it was checked against a set of small theorems whose values were
computed by hand. These remain in the repository as a regression test. The pass went through four versions. The
first read ranks syntactically and failed the rank ladder; the second fixed the two errors behind this (described
under ``Checking the instruments''); the third also unfolds definitions when deciding whether a hypothesis is a
proof; the fourth fixed the last exceptions to the ladder (also described there). Every corpus was run with the fourth version.

\subsection{Cuts across corpora}
Table values are means over proofs of at least 10 steps. Comparisons between projects use the projects with at least
20 such proofs (94 of OpenAI's 96). Figure~\ref{fig:chirho} shows the same two quantities per
project.
\begin{center}
\small
\begin{tabular}{lcccccc}
 & mathlib & human & OpenAI & sphere packing & Putnam & IMO \\
\hline
proofs measured & 143,448 & 3,195 & 25,928 & 3,553 & 353 & 29 \\
proofs with a cut & 17\% & 33\% & 47\% & 54\% & 73\% & 69\% \\
cuts dead / linear / shared & 1/74/25\% & 1/77/22\% & 2/82/16\% & 3/88/9\% & 4/86/10\% & 2/86/12\% \\
$\chi$ (bits per proof) & 0.029 & 0.063 & 0.035 & 0.022 & 0.056 & 0.056 \\
$\rho$ (bits per proof) & 0.015 & 0.028 & 0.071 & 0.188 & 0.174 & 0.159 \\
proofs larger with their cuts & 13\% & 26\% & 40\% & 50\% & 67\% & 55\% \\
\end{tabular}
\end{center}
AI-written proofs contain more cuts, and more of their cuts are linear. Comparing projects, OpenAI's redundancy $\rho$ is
higher than mathlib's with an AUC of 0.95 ($p=2\cdot 10^{-44}$; all $p$-values are from two-sided Mann--Whitney
tests \textit{\parencite{mann1947}}), and higher than human research code's with an AUC of
0.85 ($p=4\cdot 10^{-5}$). Almost all of this redundancy comes from claims used exactly once. OpenAI's compressibility
$\chi$ is only slightly higher than mathlib's (AUC 0.64, $p=1\cdot 10^{-5}$), and human research code is the most compressible of the three (OpenAI against
human, AUC 0.22, $p=9\cdot 10^{-4}$). OpenAI's extra cuts therefore come from claims its proofs do not need. Even the slight excess in $\chi$ comes from
automation (see ``Is it only tooling?''). On proofs that call no automation, OpenAI is no more compressible
than mathlib (AUC 0.55, $p=0.14$), and its redundancy is as high as ever (AUC 0.94). Sphere packing and Putnam are more redundant still (AUC 0.95 and 1.00 against mathlib), and
neither is clearly more compressible than mathlib (AUC 0.58 and 0.70, $p=0.42$ and $0.03$). The IMO corpus is too small to test.

\begin{figure}[t]
\centering
\includegraphics[width=0.8\linewidth]{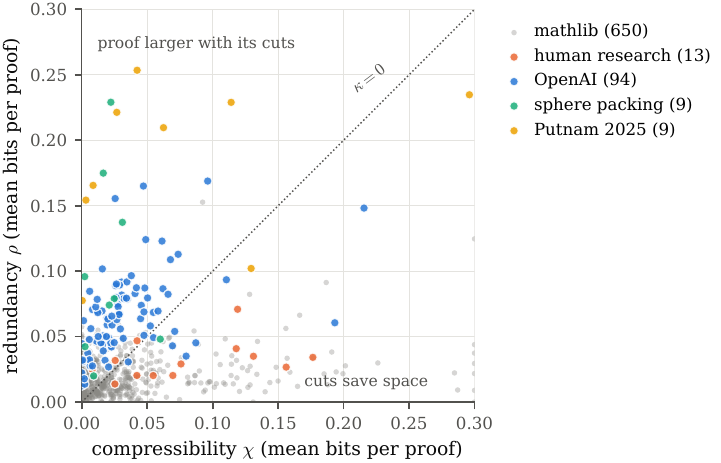}
\caption{Mean compressibility $\chi$ and redundancy $\rho$ per project (projects with at least 20 proofs). Above the diagonal, $\kappa<0$ on average, and the project's proofs are larger with their cuts than without them. Values above 0.3 are drawn at the edge.}
\label{fig:chirho}
\end{figure}

All of these values are small. Sharing could halve a proof (that is, $\chi\geq 1$) in at most 2.3\% of proofs in any
corpus. None of these corpora comes near the regime of Boolos's example.

I also measured cuts one level up, treating a project's own lemmas as cuts and their citations as uses. This result
did not survive more data. With the first 67 OpenAI projects, OpenAI seemed to compress more at this level than mathlib.
With all 94, the difference was no longer significant ($p=0.11$). Inlining every local
lemma multiplies the median project by about $2^{1.1}$ in mathlib and $2^{1.5}$ in OpenAI.

\subsection{Is the divergence only layout?}

OpenAI's redundant claims add steps, and every step has a gesture. So it is fair to ask whether the 280 bits of
divergence come only from the redundant claims. To check, I reran the entire gesture
analysis (vocabulary, eigenproofs, null, and tests) on three subsets. The first leaves out definitions, which have no
cuts. The second keeps only proofs already in their optimal layout ($\rho=0$). The third keeps only proofs with no cuts
at all. These keep 30,157, 18,220, and 17,881 of OpenAI's 36,336 theorems.

\begin{figure}[t]
\centering
\includegraphics[width=0.9\linewidth]{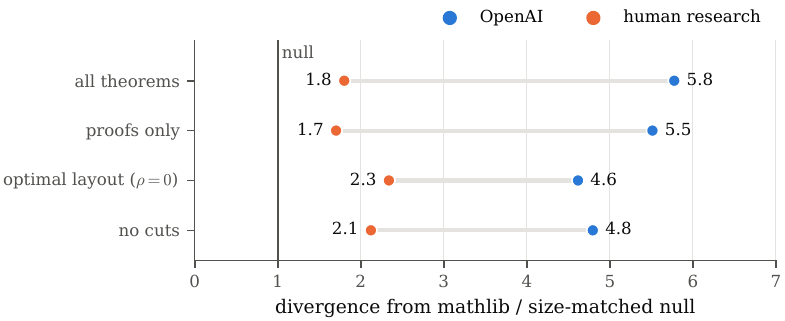}
\caption{Divergence from mathlib as a multiple of the size-matched null, on all theorems and on the three layout-controlled subsets. Each subset reruns the whole pipeline (vocabulary, eigenproofs, null). OpenAI's divergence is 280, 299, 308, and 316 bits against nulls of 49, 54, 67, and 66.}
\label{fig:layout}
\end{figure}

The divergence survives (Figure~\ref{fig:layout}). On proofs in their optimal layout it is still 4.6 times the null. The first eigenproof, however, is
mostly layout: 76\% of OpenAI's excess on it comes from proofs with redundant layout. The typical
case is a line like \texttt{have h : a = b := by omega}, a claim proved by arithmetic and used once. On proofs in their
optimal layout, this direction drops to fifth place (13 bits), and the top direction becomes rewriting under binders
(46 bits). Proofs with redundant layout carry 43\% of OpenAI's excess on this direction, close to their 40\% share of
OpenAI's proofs. Even on these proofs, OpenAI scores beyond $+2$ s.d. on
eigenproofs 2 to 8 between 13 and 31 times as often as mathlib does.

The same check changed one earlier result. A Kneser--Ney model trained on mathlib's proof terms predicts each gesture
from its parent in the tree, and so assigns each proof a cost in bits per step. Over all theorems, this model found
OpenAI's proofs no more surprising than mathlib's (3.27 bits per step for both, AUC 0.53). On proofs in their optimal layout,
OpenAI costs more (3.37 against 3.13, AUC 0.63). The redundant claims are cheap and predictable steps, and they had
been pulling OpenAI's average down. I had reported the tie before running this check. On the compiled sample, the text gap between median projects moves from 1.75 to 1.78 bits per token.

\subsection{Realized compression}

The redundancy $\rho$ measures waste. But $\chi$ only measures the compression available among the author's own cuts.
A proof may contain a repeated subproof that the author never named, and $\chi$ cannot see it. So we would also like to
know how much compression a proof contains at all, whoever laid it out, and how much of it the author obtains.

For this, I eliminate every cut and then look for repeated subproofs in the result. Let $S$ be the size of the
cut-free proof. Then define three quantities, all in bits. One, $\chi_{\mathrm{author}}=\log_2(S/A)$, where $A$ is the
size of the author's own cuts in their optimal layout. This is what the author's cuts achieve. Two,
$\chi_{\mathrm{lb}}=\log_2(S/G)$, where $G$ is the size of an explicit layout that names any repeated subproof that
pays for its own name, whether or not the author named it. Finally, $\chi_{\mathrm{DAG}}=\log_2(S/D)$, where $D$ counts
the distinct subproofs, that is, the nodes of the proof's \textit{directed acyclic graph} (DAG), in which every
repeated subproof is stored once. This is sharing with no cost for naming. If $\chi^*$ is the best achievable compression, then
\[
\chi_{\mathrm{author}}\leq\chi^*
\qquad\text{and}\qquad
\chi_{\mathrm{lb}}\leq\chi^*\leq\chi_{\mathrm{DAG}}.
\]
The \textit{realized share} of a corpus is
\[
\frac{\sum_\pi\chi_{\mathrm{author}}(\pi)}{\sum_\pi\chi_{\mathrm{lb}}(\pi)}.
\]
Since $\chi_{\mathrm{lb}}$ is at most $\chi^*$, this share is a conservative estimate. Importantly, $S$, $D$, and
$\chi^*$ depend only on the cut-free proof. Any two layouts of the same proof (e.g. \texttt{have h := p; (h, h)} and
\texttt{(p, p)}) give the same values.
\begin{center}
\small
\begin{tabular}{lcccccc}
 & mathlib & human & OpenAI & sphere packing & Putnam & IMO \\
\hline
proofs measured & 143,439 & 3,195 & 25,927 & 3,553 & 353 & 29 \\
$\chi_{\mathrm{author}}$ & 0.029 & 0.063 & 0.035 & 0.022 & 0.056 & 0.056 \\
$\chi_{\mathrm{lb}}$ & 0.039 & 0.105 & 0.115 & 0.162 & 0.186 & 0.173 \\
$\chi_{\mathrm{DAG}}$ & 0.100 & 0.223 & 0.290 & 0.424 & 0.418 & 0.437 \\
realized share & 73\% & 61\% & 31\% & 14\% & 30\% & 33\% \\
missed at least 1 bit & 0.1\% & 0.3\% & 1.1\% & 3.2\% & 3.4\% & 3.4\% \\
\end{tabular}
\end{center}
The hashing pass was a separate run, and it drops proofs over its budget, so its proof counts differ slightly from
those in the first table.
Mathlib realizes 73\% of the compression its proofs contain, and human research code 61\%. OpenAI realizes 31\%, Putnam
30\%, and sphere packing 14\%. Sphere packing is the AI corpus whose gestures are closest to human code
(1.4 times its null), and it realizes the least. Part of these gaps comes from automation (see ``Is it only
tooling?'').

AI-written proofs also have more compression available ($\chi_{\mathrm{DAG}}$ is two to four times mathlib's). This is
mostly because they are longer and call more automation, whose certificates repeat themselves. On proofs that call no
automation, OpenAI's $\chi_{\mathrm{DAG}}$ is 1.5 times mathlib's. From 100 nodes up, proofs of the same size have about the same $\chi_{\mathrm{DAG}}$ in
every corpus, to within a factor of about 1.6. One might then object that longer proofs are simply harder to lay out
well. To check this, I grouped proofs by the size of their cut-free form (Figure~\ref{fig:size}).

\begin{figure}[t]
\centering
\includegraphics[width=0.9\linewidth]{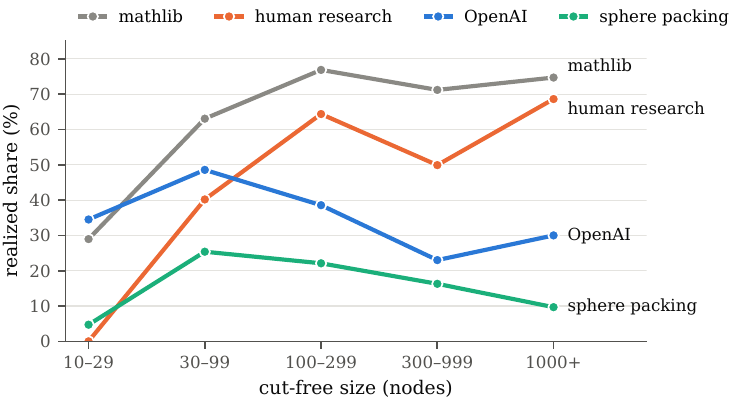}
\caption{Realized share within bins of cut-free proof size. The bins hold 89,144, 42,600, 8,393, 2,264, and 906 mathlib proofs; 10,301, 9,219, 3,148, 2,121, and 1,049 OpenAI proofs; 1,402 to 115 human proofs; and 1,263 to 285 sphere packing proofs. Putnam and IMO are left out, since most of their bins hold fewer than 30 proofs.}
\label{fig:size}
\end{figure}

From 30 nodes up, mathlib realizes more in every bin. Across projects, OpenAI's realized share is lower than mathlib's
with an AUC of 0.25 ($p=6\cdot 10^{-15}$), and of 0.36 on proofs that call no automation. OpenAI therefore loses in both directions, even on proofs that call no automation. It writes cuts that save
nothing (the redundancy $\rho$), and it leaves out cuts that would save something.

Mathlib also leaves compression unrealized. Across all its proofs, the author's cuts reduce the cut-free size by 36\%, the explicit
layout by 48\%, and full sharing by 55\%. Its largest misses are computational. The proof that the Cartan matrix of
$E_8$ has determinant 1 has a cut-free size of 561,653 nodes, and its author's layout saves nothing. An explicit layout
would bring it to 21,052 nodes, and full sharing to 8,156. The same holds for $E_6$ and $E_7$ and for several formulas
on Weierstrass curves. In each case a tactic emits the same subproof many times.

Two subproofs are counted as shared only if they are identical. A
mathematician's lemma is usually a \textit{general} statement, with a parameter, used at several different instances.
Such lemmas are invisible to this measure, so realized compression captures repetition only. Abstraction of this kind
is the target of the literature on cut \textit{introduction} (e.g. the work of Hetzl, Leitsch, Reis, and Weller \textit{\parencite{hetzl2012towards,hetzl2014quantified}}). Measuring it at this scale
remains open.

\subsection{Is it only tooling?}

We asked above whether the divergence is only layout, and the same question can be asked of the tools. As noted among the confounds, a tactic such as \texttt{ring} or
\texttt{omega} writes a certificate into the proof term, and the certificate has its own gestures. Certificates make up
32\% of OpenAI's gestures and 9\% of mathlib's. The rewriting that \texttt{rw} and \texttt{simp} leave behind (chains of
\texttt{Eq.mpr}, \texttt{congrArg}, and \texttt{Eq.trans}) is the same kind of artifact. To check, I collapsed each
certificate into a single step, which records that a tactic was called, and kept everything else, including the facts
the author passes to the tactic. I then reran the entire gesture analysis (vocabulary, eigenproofs, null) three times:
with automation collapsed, with automation and rewriting collapsed, and with both removed, so that only the author's own
steps remain.

OpenAI's divergence falls from 280 to 182 bits, against a null of 49 (3.7 times). With rewriting collapsed as well, it
is 191 bits against 56 (3.4 times), and on the author's own steps, 220 bits against 56 (3.9 times). Each of these is
far above the largest value the null produced (60 to 68 bits). So about a third of the divergence comes from the tools,
and the rest does not. Human research code behaves differently: its divergence falls from 1.8 to 1.1 times its null,
within the range of the null itself, so most of what separated it from mathlib was tooling. For OpenAI,
with automation collapsed, the first eigenproof is how often a tactic is called, together with lemmas about
inequalities. On the author's own steps, it is chains of inequality lemmas (from $\leq$ to $\leq$, and from $<$ to
$\leq$). This may be subject matter rather than style, since the gesture study does not compare within areas.

For the other measures, the simplest control is to keep only the proofs that call no automation at all. OpenAI calls
automation in 21\% of its proofs, against 4\% of mathlib's, but it rewrites about as much. The AUCs of OpenAI against
mathlib, on all proofs and on proofs that call no automation, are (proofs of at least 10 steps, in projects with at
least 20 such proofs, so the first column differs slightly from the earlier tables):
\begin{center}
\begin{tabular}{lcc}
 & all proofs & no automation \\
\hline
steps per theorem & 0.87 & 0.73 \\
unused hypotheses & 0.75 & 0.77 \\
intermediate claims per 100 steps & 0.94 & 0.95 \\
redundancy $\rho$ & 0.95 & 0.94 \\
compressibility $\chi$ & 0.63 & 0.55 \\
$\kappa$ & 0.12 & 0.07 \\
realized share & 0.25 & 0.36 \\
\end{tabular}
\end{center}
The redundant claims, the unused hypotheses, and the low $\kappa$ do not depend on the tools. OpenAI's proofs are longer
mostly because they call more automation, and the slight excess in $\chi$ comes from automation alone ($p=0.14$ without
it), since certificates repeat themselves. The realized share moves the most, because certificates hold most of the compression
available in the proofs that call automation, and no author can name a subproof inside one. On proofs that call no
automation, mathlib realizes 86\% of the available compression, human research code 82\%, OpenAI 55\%, and sphere
packing 33\%. OpenAI still realizes less than mathlib (AUC 0.36, $p=1\cdot 10^{-5}$) and less than human research code
(AUC 0.28, $p=0.01$), but the gap is smaller than over all proofs, and human research code no longer differs from
mathlib (AUC 0.49).

\subsection{Checking the instruments}

Some of what our instruments should report is fixed by theorems. A measurement that violates a theorem is a broken
measurement, so I treat the following results as calibration.

First, by the reading of $\kappa$ above, a step can be copied at most as often as the product of the uses of the cuts
around it. This gives two bounds:
\[
\kappa\leq\sum_{\text{cuts}}\log_2(\text{uses}),
\qquad
\chi_{\mathrm{DAG}}\leq\log_2(\text{most copies of any subproof}).
\]
Both hold for every proof in every corpus. The first bound is loose (in mathlib, $\kappa$ is typically 0.18 of it),
since it ignores how large the copied proofs are. A refined estimate that accounts for size,
$\sum\log_2(1+(u-1)\cdot|p|/N)$ over cuts (with $u$ the cut's uses, $|p|$ the size of its proof, and $N$ the written
size), tracks $\kappa$ with a Spearman rank correlation \textit{\parencite{spearman1904}} of 0.95 in mathlib.

Second, the enormous blowups found by Statman and Orevkov have a known source. A \textit{redex} is a function applied to an argument, which is how a cut appears in a proof term. When a cut is
eliminated, a proof can be substituted into a position where it is applied to another proof. This produces a new
redex, called a \textit{created redex}. Define the \textit{rank} of a cut as the depth of implications nested to the left in
its formula (with $\neg A$ read as $A\to\bot$). A created redex can only arise from a cut of rank at least one. Cuts of
rank zero cost at most a single exponential when eliminated. I call this the \textit{rank ladder}.

When I first checked the rank ladder, however, it failed for more than half of the mathlib
proofs with created redexes. A theorem cannot fail, so the instrument had to be wrong. By printing a counterexample, I
found two errors. The pass read ranks from the written formula, so that $s\subseteq t$ or \texttt{Monotone f}, which are
implications once their definitions are unfolded, counted as rank zero. It also missed cuts hidden behind
substitutions of data. Once both were fixed, the ladder held for 99.6\% of 1,100 mathlib proofs with created redexes.
I printed the five remaining exceptions (four in mathlib, one in the IMO corpus), and each had a cut of rank one that the
pass read as rank zero. Two cuts are inclusions of submodules, $p\leq q$, and one is an inclusion of finite sets; their
orders are built with \texttt{let}, which the unfolding did not pass through. One cut's hypothesis is a proposition
only through a defined family (the arrows of a sieve), which the pass unfolded before reading its type. The last
mentions a set defined by a local \texttt{let}, which the pass did not substitute. With these fixed, the ladder holds for
every proof with created redexes in every corpus (1,100 in mathlib, and 35, 492, 50, 14, and 5 in the human, OpenAI,
sphere packing, Putnam, and IMO corpora).

Third, real libraries are far from the worst case. Sharing could halve a proof ($\chi_{\mathrm{DAG}}\geq 1$) in only 2.1\% of mathlib proofs. Created
redexes appear in 0.8\% of mathlib proofs and 1.9\% of OpenAI's. With definitions unfolded, 21\% of mathlib's cuts have
rank at least one (against 8\% when ranks are read syntactically). In mathlib, proofs whose shared cuts reach rank one
or more have \textit{less} compression available than those with only rank-zero shared cuts ($\chi_{\mathrm{DAG}}$ of
0.50 to 0.59, against 0.74; Figure~\ref{fig:rank}). What rises with rank is created redexes, from 5\% to 30\%. A shared
cut of rank one is a local lemma of the form $A\to B$ that is applied at several places. Positive rank is the
precondition for Boolos-type speed-ups. In mathlib, the proofs that contain positive-rank shared cuts are less compressible than proofs whose shared cuts all
have rank zero, which suggests that such cuts serve abstraction rather than compression.

\begin{figure}[t]
\centering
\includegraphics[width=\linewidth]{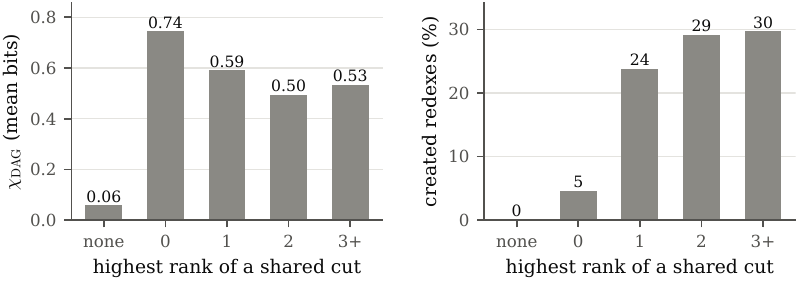}
\caption{Mathlib proofs grouped by the highest logical rank among their shared cuts (definitions unfolded): 134,172 with no shared cut, then 6,752, 1,992, 385, and 138. Left: available compression. Right: share of proofs in which eliminating cuts creates new proof redexes.}
\label{fig:rank}
\end{figure}
Finally, the pipeline was checked against itself. The driver's first run and the hashing pass compute cut-free
sizes independently, the first by normalizing each proof and the second by hashing its term. They agree for 98.5\% of
mathlib proofs, and the exceptions are the created redexes, which the first run reduces and the hashing pass leaves
in place. The OpenAI projects compiled both ways (see ``Preparing the corpora'') also give the same proof terms: of
2,879 proofs from seven of them, 99.8\% have identical sizes, and the 5 that differ do so outside their cuts.

\section{From mathematical data to mathematical evidence?}
Tycho Brahe's observations of the heavens were valuable because their precision made discrepancies impossible to ignore. 
Their provenance mattered too, in the long run especially: Kepler inherited measurements that could be revisited, 
recalculated, and brought to bear on competing models. It would be rather grandiose to cast our collection of Lean files as 
a new observatory on Hven, and I do not mean to suggest that we are on the verge of a Newtonian synthesis. Still, a compiled Lean file gives us something unusually definite: a proof term checked by the kernel 
against a proposition in a specified formal environment. This establishes that the term proves the proposition Lean 
has elaborated, relative to the declarations and axioms in play. This may be less than it seems: the formal 
proposition may fail to capture the intended mathematical claim; the accompanying informal argument may say 
something different; the proof may depend on a particular library version or environment that others cannot 
readily reproduce. If these artifacts are to become useful mathematical data, we must keep track of where they 
came from, what they depend on, and which attempts to compile them failed.

OpenAI's public mathematics repository (whose openness was 
crucial to the success of this project) contains hundreds of manuscripts grouped into families, while its README \textit{\parencite{openai_math}}
acknowledges that the results occupy different stages of verification and that roughly 42\% of the top-line results have 
Lean formalizations. Moreover, on 6 October 2026, Bastounis, Circelli, and Hansen \textit{\parencite{bastounis2026}} 
published a preprint arguing that parts of OpenAI's announced Navier--Stokes proof do not correspond faithfully to their 
Lean formalizations. One of their examples concerns an inverse estimate for which the informal paper requires four additional 
derivatives while the cited formal result uses five; the authors also identify discrepancies in a pressure-flux 
estimate and its proof. The authors distinguish 
their claims about formalization mismatches from a judgment on the correctness of the informal proof. Kallel and El Louadi make a related 
argument from another release of AI-written results, in which the statements a human must audit are a small fraction 
of the kernel-checked proof \textit{\parencite{kallel2026verification}}. These criticisms 
illustrate a rather immediate difficulty for empirical work: a formal verification result can be sound relative to its 
environment while failing to certify the claim for which it is presented as evidence.

The problem becomes even more acute when such artifacts are collected at scale. A corpus that treats compiled proofs, 
uncompiled drafts, incomplete formalizations, and proofs checked under incompatible environments as equivalent 
observations will obscure precisely the distinctions it is meant to investigate. Opaque or redundant derivations 
also impose costs on subsequent researchers, who must recover their dependencies, determine what has actually been 
established, and distinguish mathematical structure from accidental features of the generation or formalization process. 
The opposite policy, discarding every proof that fails to compile against a chosen library version, introduces its own
bias. In the corpora examined here, moving to a common Lean and mathlib environment eliminates 20\% of the 
sphere-packing theorems, 11\% of the Putnam solutions, and 67\% of the IMO solutions; 222 of the 4,898 theorems in the 
human-written Carleson project also fail to survive the migration. Longer proofs are more likely to break, so the 
surviving corpus systematically favors shorter ones. A dataset assembled without recording these losses could 
therefore mistake compatibility with a particular software environment for a structural property of mathematical reasoning.

This is where empirical analysis becomes part of the solution. In the present study, the rank-ladder test initially 
failed on more than half of the mathlib proofs to which it applied. Since the bound follows mathematically, these 
failures exposed defects in the measurement procedure. Inspecting the counterexamples revealed errors in the 
treatment of defined propositions and substitutions; correcting the instrument restored the expected bound in every 
tested case. The same principle extends to the corpus itself: unusual proof structures, unexplained changes across 
versions, excessive intermediate claims, and abnormal patterns of sharing can identify artifacts that merit closer 
inspection. Of course, such analyses cannot replace checking the correspondence between a formal statement and its intended 
mathematical meaning. They can, however, make that audit more systematic and help identify where the underlying data or the 
instruments used to study it have gone wrong. In my ideal world, the aim would be to build a mathematical archive in which 
failures and corrections remain visible, so that accumulated proofs become a resource for discovering regularities rather 
than a mass of outputs whose evidential status is impossible to recover.

Most abstractly, underlying this enterprise is the siren's call to Cantor's paradise \textit{\parencite{hilbert1926}}. Once a proof has been formalized, 
it becomes possible to collect it alongside thousands of others, strip away much of its mathematical content, and compare 
structures that would otherwise be difficult to place on common footing. This is what the notion of a gesture attempts 
to do. We erase names, retain the head connective of each type, and record how often each hypothesis is used. The result 
is a description of a proof that can be compared with proofs of entirely different theorems. Naturally, we lose something 
in the process; Frege would have his objection ready. Even so, the abstraction separates the corpora: OpenAI's gesture distribution lies 
280 bits from mathlib's, against 49 bits for a typical size-matched sample of mathlib compared with itself. The 
divergence persists when we restrict attention to proofs in their optimal layout, and when each call to automation is
collapsed into a single step. We have constructed a common space in 
which a difference between collections of proofs becomes measurable, even though the theorems they establish may have 
little in common.

But, the Kroneckerian impulse enters at the point where we insist on seeing the construction itself. A theorem 
statement in a manuscript is one thing; a proof term that Lean's kernel has checked is another. This insistence 
imposes costs: to compare the corpora, I compiled them against a common version of Lean and mathlib, and some proofs 
did not survive the journey. Moreover, longer proofs were more likely to fail, so the surviving data were 
biased towards shorter proofs. Here the demand for an effectively exhibited construction collides with the ambition to 
study mathematics in its full variety. We cannot simply declare every surviving proof representative of its original 
corpus, nor can we conclude that every discarded proof was mathematically defective. Some failures reflect changes in 
libraries and toolchains. The exactness of our criterion does not make the selection it imposes innocuous--and we can always 
hunt for more fine-grained ways to classify our proofs.

There is a similar tension that appears within the proof terms themselves. We can ask how much a proof contracts when 
its intermediate claims are eliminated, and how much repeated structure its author has managed to exploit. These questions 
admit exact answers for the formal objects we study, even when normalization would produce a term far too large to write 
down explicitly. Yet the answers resist any simple celebration of machine-generated mathematics--OpenAI's proofs contain 
more intermediate claims than either mathlib or the human research projects, but those claims often do little work: 
40\% of OpenAI's proofs are larger with their cuts than without them, against 13\% in mathlib and 26\% in the human 
research corpus. OpenAI also realizes only 31\% of the compression available through repeated subproofs, against 73\% 
in mathlib (and 55\% against 86\% on proofs that call no automation). Of course, these are merely properties of the derivations we measured, not verdicts on the mathematical worth of their 
conclusions. But then the whole point of the exercise is precisely that such distinctions can be made, tested, and revised. 
The present archive gives us something to investigate, and we can investigate it with eyes towards its limitations.

To me, the most interesting prospect of this line of inquiry is that this study will never quite finish collecting its 
object. A proof changes its appearance as we move between source text, elaborated term, cut-free form, and compressed 
representation; a library changes beneath it, and an apparently innocuous change in our instruments can send us back 
to reconsider what we thought we had measured. This makes the empirical study of proofs a rather peculiar science, 
whose data are themselves mathematical constructions and whose instruments must answer to mathematical theorems. 
There is something here that vindicates Imre Lakatos's insights in \textit{Proofs and Refutations} \textit{\parencite{lakatos1976}}: the proof is a moving 
object, altered by criticism, counterexample, and the introduction of distinctions that were invisible in the original 
formulation. Perhaps, as these corpora grow, we will discover regularities in mathematical reasoning that are difficult 
to see from within any single proof, or find that some of our present categories collapse under closer inspection. Either 
way, the study will have to proceed through conjectures about proofs, counterexamples to our conjectures, and revisions 
to the concepts with which we describe them. The archive will change as mathematics changes, and our account of 
mathematical practice will have to change with it.

\printbibliography[category=cited,heading=bibintoc,title={References}]
\nocite{*}
\printbibliography[notcategory=cited,heading=bibintoc,title={Further related work}]

\end{document}